\theoremstyle{plain}
\newtheorem{theorem}{Theorem}[section]
\newtheorem{thm}[theorem]{Theorem}
\newtheorem{lemma}[theorem]{Lemma}
\theoremstyle{definition}
\newtheorem{defn}[theorem]{Definition}
\newtheorem{ex}[theorem]{Example}
\theoremstyle{remark}
\newtheorem{rem}[theorem]{Remark}
\newcommand{\R}{\mathbb{R}}
\newcommand{\C}{\mathbb{C}}
\newcommand{\T}{\mathbb{T}}
\newcommand{\D}{\mathbb{D}}
\newcommand{\N}{\mathbb{N}}
\DeclareMathOperator*{\dist}{dist}
\title[Inner functions and optimal approximants]{Remarks on inner functions and optimal approximants}
\author[B\'en\'eteau]{Catherine B\'en\'eteau}
\address{Department of Mathematics, University of South Florida, 4202 E. Fowler Avenue, Tampa, FL 33620, USA.}
\email{cbenetea@usf.edu}
\author[Fleeman]{Matthew Fleeman}
\address{Department of Mathematics, Baylor University, One Bear Place \#97328, Waco, TX 76798-7328, USA.}
\email{Matthew$\underline{\,\,\,}$Fleeman@baylor.edu}
\author[Khavinson]{Dmitry Khavinson}
\address{Department of Mathematics, University of South Florida, 4202 E. Fowler Avenue, Tampa, FL 33620, USA.}
\email{dkhavins@usf.edu}
\author[Seco]{Daniel Seco}
\address{Instituto de Ciencias Matem\'aticas, Calle Nicol\'as Cabrera, UAM, 28049 Madrid,
Spain.} \email{dseco@mat.uab.cat}
\author[Sola]{Alan A. Sola}
\address{Department of Mathematics, Stockholm University, SE-106 91 Stockholm, Sweden.}
\email{sola@math.su.se}
\date{\today}
\subjclass[2010]{Primary 46E22; Secondary 30J05.}
\begin{document}

\maketitle

\begin{abstract}
We discuss the concept of inner function in reproducing
kernel Hilbert spaces with an orthogonal basis of monomials and examine connections between inner functions and optimal polynomial approximants to $1/f$, where $f$ is a function in the space. We revisit some classical examples from this perspective, and show how a construction of Shapiro and Shields can be modified to produce inner functions.
\end{abstract}

\section{Introduction}\label{Intro}

The notion of inner function is a central concept in operator-theoretic function theory, and has played a significant role in the description of the invariant
subspaces of the shift operator. The Hardy space $H^2$ of the disk
consists of all the functions $f$ analytic in the open unit disk $\D$ that satisfy the norm boundedness condition
$$\|f\|^2_{H^2}=\sup_{0< r < 1} \frac{1}{2\pi} \int_0^{2 \pi} |f(re^{i \theta})|^2 \, d \theta < \infty.$$
It is well-known (see, e.g., \cite{Duren}) that such functions have non-tangential boundary values on the unit circle $\T$ almost everywhere.
 A bounded analytic function $f \colon \D \to \C$ is called \emph{inner} if
its boundary values satisfy
\begin{equation}\label{HardyInner}
|f(e^{i \theta})|=1 \quad a.e. \,\, \theta \in [0, 2\pi).
\end{equation}
Beurling \cite{Beurling} showed that every closed $z$-invariant subspace is generated by an inner function.  Implicit in his proof is the fact that if $G$ is an inner function, then $G$ is orthogonal to $z^jG$ for every integer $j \geq 1.$  In his analysis of $z$-invariant subpaces in the Dirichlet space $D$, which is the space of all analytic functions in the disk whose derivative is square integrable with respect to area measure, Richter \cite{Richter} showed that as in the case of the Hardy space, the invariant subspaces are generated by a single function which satisfies the same orthogonality relationships.

The situation in the Bergman spaces turned out to be considerably more complicated.  The Bergman space $A^2$ is the set of all analytic functions in the disk whose modulus squared is integrable with respect to area measure.  In 1991, Hedenmalm \cite{Hedenmalm} noticed that if $M$ is a $z$-invariant subspace, then the function that is a solution to a particular extremal problem related to $M$ plays the same role as that of the $H^2$ inner functions.  Moreover, this function is a so-called \emph{contractive divisor}.  Duren, Khavinson, Shapiro and Sundberg \cite{DKSS} extended this result to all $A^p$ spaces of the disk.   In \cite{Koren2}, Korenblum coined the term ``$A^2$-inner" to describe Hedenmalm's extremal function $G$, and in \cite{ARS}, Aleman, Richter and Sundberg  proved an analogue of Beurling's theorem for the Bergman space, namely, that $z$-invariant subspaces $M$ of the Bergman space are generated by the ``wandering subspace" $M \ominus zM$. They defined an $A^2$-inner function as a function $G$ that has norm $1$ and is such that
$G$ is orthogonal to $z^jG$ for every integer $j \geq 1.$ More generally, such orthogonality conditions appear naturally in connection with the study of wandering subspaces in operator theory \cite{Halmos,KLS,LS,CDS}, and more concretely in many papers dealing with the 
shift operator acting on spaces of analytic functions. 

Another nice property that functions in the Hardy space satisfy is that any function factors as the
product of an inner and an outer function.  Outer functions $f \in H^2$ are defined by the condition that
$\log|f(0)| = \frac{1}{2\pi} \int_0^{2 \pi} \log|f(e^{i \theta})| \, d \theta$, and in the Hardy space, such functions are always \emph{cyclic} (and vice versa), that is, their polynomial multiples generate the whole space.  This factorization was an important tool in Beurling's characterization of the invariant subspaces of the Hardy space.  In \cite{Koren2}, Korenblum defined a notion of an $A^2$-outer function based on the concept of domination, and he proved that cyclic functions are outer in this sense, and the converse was proved in \cite{ARS}. However, cyclic functions in the Bergman space are still not well-understood, and it is an open problem to characterize cyclic functions in other spaces of analytic functions such as the Dirichlet space.

In \cite{BCLSS1}, the authors proposed to investigate cyclic functions in a large class of Hilbert spaces of analytic functions via the study of
\emph{optimal polynomial approximants}, that is, polynomials $p$ minimizing the norm
$\|pf-1\|$ among all polynomials in the space $\mathcal{P}_n$ of
polynomials of degree less or equal to $n$. This was done in the
generality described here in \cite{FMS1}. Optimal polynomial
approximants are given as the unique solution to a linear system of
the form $Mc=b$
where $M$ is a matrix whose elements are given as
\begin{equation}\label{LinearApprox2}
M_{j,k}=\left\langle z^j f, z^k f\right\rangle,
\end{equation}
$\left\langle\cdot,\cdot \right\rangle$ denotes the inner product in the space $H$, $c$ is
the vector of unknown coefficients of the optimal polynomial
approximant $p_n^*$; and $b$ is the vector given by
\begin{equation}\label{DefB}
\left(b_j\right)_{j=0}^n= \left(\left\langle 1,
z^jf\right \rangle \right)_{j=0}^n= \left(
\overline{f(0)},0,...,0\right).\end{equation}
Optimal approximants were further studied in a subsequent series of papers \cite{BKLSS,BKLSS2}, and it seems worthwhile to isolate additional properties of functions $G$ that satisfy the 
orthogonality relations 
\begin{equation}\label{Hinner0}
\left\langle z^j G, G \right\rangle =0, \,\,\,\, j=1, 2, \ldots
\end{equation}
from this perspective.

The goal of this paper is to discuss the notion of inner function in a wide class of Hilbert spaces, with a special focus on describing the optimal polynomial approximants associated with such functions.  We point out that certain functions, which we dub Shapiro-Shields functions after the authors in whose papers they first appeared, are inner and can be viewed as analogues of finite Blaschke products. Finally, we show how distances between $1$ and subspaces generated by inner functions can be computed using elementary linear algebra methods.

In Section 2, we give the relevant definitions and prove a characterization of inner functions in terms of optimal polynomial approximants: the optimal approximant of all orders of an inner function turn out to be equal to a single constant.  In Section 3, we use a slightly modified version of a construction of Shapiro and Shields to give examples of inner functions in Hilbert spaces of weighted Hardy type vanishing on prescribed finite sets.  We also compute distances between the function $1$ and invariant subspaces generated by an inner function. Finally, in Section 4, we discuss some examples of inner functions.

\noindent\textbf{Acknowledgements.}  The authors would like to thank
Constanze Liaw for many helpful conversations. B\'en\'eteau and
Khavinson are grateful to Stockholm University for supporting their
visit during work on this project. The work of Seco is supported by
Ministerio de Econom\'ia y Competitividad Project MTM2014-51824-P.

\section{Characterization of inner functions via optimal approximants}

From now on, given a sequence of real positive numbers
$\omega=\{\omega_k\}_{k \in \N}$, let $H=H^2_{\omega}$ be
the Hilbert space of holomorphic functions $f \colon \D\to \C$ with
Taylor coefficients $\{a_k\}_{k \in \N}$ endowed with norm
\begin{equation}\label{NormDef}\|f\|_{\omega} := \left(\sum_{k=0}^{\infty} |a_k|^2 \omega_k \right)^{1/2} <
\infty,\end{equation} and equipped with the inner product
$\left\langle\,,\right\rangle=\left\langle\,,\right\rangle_{\omega}$ induced by the norm. Without loss
of generality we will assume $\omega_0=1$. Furthermore, let us
restrict ourselves to the class of weights $\omega$ with
\begin{equation}\label{DefOmega} \lim_{k
\rightarrow \infty} \frac{\omega_k}{\omega_{k+1}} =1. \end{equation}
Condition \eqref{DefOmega} ensures that functions that are
holomorphic on a disk of radius strictly greater than 1 are elements
of $H$, that all elements of $H$ are holomorphic on $\D$, and that
both the forward and backward shifts are bounded operators. Note that the Hardy space, the Bergman space, and the Dirichlet space are all examples of spaces $H^2_{\omega}$ with appropriate choice of weights $\omega$. See \cite{Duren, Shields, HedKorZhu, DS, EFKMR} for treatments of these spaces.

Spaces defined as in \eqref{NormDef} are examples of
\emph{reproducing kernel Hilbert spaces} (from now
on, \emph{RKHS}) over the disk $\D$. This means
that for any point $z_0 \in \D$, point evaluation is a bounded
functional, and there exists a function $k_{z_0} \in
H^2_\omega$ such that for any function $g \in
H^2_\omega$ we have the reproducing property
\begin{equation}\label{RepProp}
g(z_0)=\langle g,k_{z_0}\rangle.
\end{equation}
Using the usual representation of a reproducing kernel in terms of an orthonormal basis, we find that
\begin{equation}\label{eqn001}
k_{z_0}(z)=\sum_{k=0}^\infty \frac{\overline{z_0^k} z^k}{\omega_k}.
\end{equation}
See \cite{Paul} for a primer on the theory of RKHS.

\begin{defn}\label{Hinner}
A function $f \in H$ is called \emph{$H-$inner} (or simply,
\emph{inner}) if $\|f\|_H=1$ and for all $j > 0$:
\begin{equation}\label{Hinner1}
\langle z^j f, f \rangle_H =0.
\end{equation}
\end{defn}
Note that the standard Hardy inner functions are inner in this sense, as are Bergman-inner functions.  We will say a closed subspace $M \subset H$ is $z$-invariant in $H$ if $zM \subset M$.  For a function $f \in H$, we will write $[f]$ for the subspace generated by $f$ under the unilateral shift, that is, the closure of all polynomial multiples of $f$ in the norm of $H$.

A related concept is that of cyclicity.  A function is called \emph{cyclic} (in $H$, for
the operator of multiplication by $z$) if $[f]$ is equal to the whole space $H$.
Because polynomials are dense in $H$, the function 1 is cyclic,
and hence a function $f$ is cyclic if and only if $1 \in [f]$. That
is, we can define $f$ to be cyclic
 if there exists a
sequence $\{p_n\}_{n\in \N}$ of polynomials such that
\begin{equation}\label{CycFunct}
\lim_{n \rightarrow \infty} \|p_n f-1\|^2_H = 0.
\end{equation}

The following theorem is known for many spaces and appears in different forms in several places, including in \cite{Richter, ARS}, but we include it for completeness and to emphasize that it holds thanks to general principles governing these Hilbert spaces, rather than to particular characteristics of the individual spaces.

\begin{thm}\label{Thm1}
Let $M$ be $z$-invariant in $H$, and assume that there exists a function in $M$ that does not vanish at $0$. Then there is a unique solution
$G= h/\sqrt{h(0)}$ to the extremal problem
\begin{equation}\label{extremal}
\sup \left\{ \mathrm{Re}\left( g(0) \right): g \in M, \, \|g\| \leq 1 \right\},
\end{equation}
 where $h$ is the
orthogonal projection of 1 onto $M$.  Moreover, $G$ is an $H-$inner function. Conversely, if $G$ is a (non-constant) $H-$inner function, then $G$ generates a proper $z$-invariant subspace and solves (up to multiplication by a unimodular constant) the extremal problem \eqref{extremal} for $M = [G]$.
\end{thm}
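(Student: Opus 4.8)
The plan is to reduce everything to the reproducing kernel at the origin together with the characterizing property of an orthogonal projection. First I would record that, since $\omega_0 = 1$, evaluating \eqref{eqn001} at $z_0 = 0$ gives $k_0 \equiv 1$, so that $g(0) = \langle g, 1 \rangle$ for every $g \in H$. Writing $h$ for the orthogonal projection of $1$ onto $M$, the defining property $1 - h \perp M$ yields $\langle g, 1 \rangle = \langle g, h \rangle$ for all $g \in M$; hence the functional in \eqref{extremal} is $\mathrm{Re}\,\langle g, h \rangle$ on the unit ball of $M$. A Cauchy--Schwarz argument then shows the supremum equals $\|h\|$ and is attained at the unique point $g = h/\|h\|$. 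Taking $g = h$ in the identity above gives $h(0) = \langle h, h \rangle = \|h\|^2$, so $h/\|h\| = h/\sqrt{h(0)} = G$, as claimed. The hypothesis that $M$ contains a function $f_0$ with $f_0(0) \neq 0$ guarantees $h \neq 0$, since $f_0(0) = \langle f_0, h \rangle$ would otherwise vanish.

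For the assertion that $G$ is inner, the normalization gives $\|G\| = 1$ at once, so only the orthogonality relations \eqref{Hinner1} remain. Here I would use $z$-invariance: since $h \in M$ we have $z^j h \in M$ for every $j \geq 1$, and therefore $\langle 1 - h, z^j h \rangle = 0$. This rearranges to $\langle h, z^j h \rangle = \langle 1, z^j h \rangle = \overline{(z^j h)(0)}$, which is $0$ because $z^j h$ vanishes at the origin for $j \geq 1$. Conjugating and dividing by $\|h\|^2$ delivers $\langle z^j G, G \rangle = 0$.

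For the converse the key computation is to identify the projection of $1$ onto $[G]$ explicitly. I would guess $h = \overline{G(0)}\,G$ and verify the two defining properties: clearly $h \in [G]$, while for $j \geq 0$ one computes $\langle 1 - \overline{G(0)}G, z^j G \rangle = \overline{(z^j G)(0)} - \overline{G(0)}\langle G, z^j G \rangle = 0$, using $(z^j G)(0) = 0$ and the inner relations for $j \geq 1$, and the definition of $h$ for $j = 0$. By uniqueness of the orthogonal projection, $P_{[G]} 1 = \overline{G(0)}\,G$. Properness of $[G]$ is then immediate: if $[G] = H$ the projection of $1$ is $1$ itself, forcing $1 = \overline{G(0)}\,G$, which is impossible since $G$ is non-constant. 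Finally, when $G(0) \neq 0$ the first part applies to $M = [G]$ and identifies the unique extremal function as $h/\|h\| = (\overline{G(0)}/|G(0)|)\,G$, a unimodular multiple of $G$.

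I expect the main obstacle to be the converse, and specifically the degenerate possibility $G(0) = 0$: in that case $h = 0$, every element of $[G]$ vanishes at the origin (so the hypothesis of the first part is unavailable), and the extremal problem \eqref{extremal} degenerates, as the inner function $z$ in the Hardy space illustrates. The way around this is precisely the direct verification of $P_{[G]}1 = \overline{G(0)}\,G$ above, which needs no nondegeneracy and simultaneously yields properness; the solvability of \eqref{extremal} by $G$ is then read off in the nondegenerate case $G(0) \neq 0$. I would also take care to state explicitly how the unimodular constant $\overline{G(0)}/|G(0)|$ enters, since $G$ itself is the extremal function only after this rotation.
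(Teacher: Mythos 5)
Your proof is correct and follows essentially the same route as the paper's: both identify the projection $h = P_M 1$ with the reproducing kernel of $M$ at the origin, apply Cauchy--Schwarz to solve the extremal problem, and reverse this identification (showing $P_{[G]}1 = \overline{G(0)}\,G$) for the converse. The only differences are cosmetic: you obtain uniqueness from the equality case of Cauchy--Schwarz where the paper invokes convexity of the competing class, and you derive properness of $[G]$ directly from the projection formula where the paper passes to a limit of polynomial multiples $p_nG \to 1$; your explicit handling of the degenerate case $G(0)=0$ is a welcome extra precision.
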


\begin{proof}

Let $M$ be $z$-invariant in $H$ and let $h$ be  the orthogonal
projection of 1 onto $M$. Then since $1 - h \perp h,$ we have that $
\|1-h\|^2 + \|h\|^2 = 1,$  $\langle 1 - h, 1-h \rangle = 1 - h(0),$
and $ 0 = \langle z^j h , 1 - h \rangle = - \langle  z^j h , h
\rangle, j=1,2,\ldots$  Therefore $\|h\|^2 = h(0),$ so letting $G(z)
= h(z) / \sqrt{h(0)}$, we see that $G$ has norm $1$ and is inner.

Now let $g \in M.$  Then $ 0 = \langle g, 1-h \rangle = g(0) - \langle g, h \rangle,$ or $\langle g, h \rangle = g(0),$ so in fact $h(z) = K_M(z, 0),$ the reproducing kernel of $M$ at $0.$  If in addition $\|g\| \leq 1,$ then by the Cauchy-Schwarz inequality, $\mathrm{Re}(g(0)) \leq \|h\| = \sqrt{h(0)}.$  Therefore, by the above, $G$ solves the extremal problem \eqref{extremal}.

Uniqueness follows by a standard argument in extremal problems in Hilbert spaces (see, e.g, \cite{DS}):  it is easy to see that the extremal problem   \eqref{extremal} is equivalent to the problem of minimizing the norm of all functions $g \in M$ such that $g(0) = 1$.  This last collection of functions forms a convex set in $H$, and convex sets in Hilbert spaces have unique elements of minimal norm.

Conversely, suppose $G$ is a (non-constant) inner function.  Suppose the invariant subspace $[G] = H.$  Then $ 1 \in [G],$ and therefore there exist a sequence of polynomials $p_n$ such that $p_n G \rightarrow 1$ in $H.$  Since $G$ is inner, $\langle G, p_n G \rangle = \overline{p_n(0)}.$ Taking limits of both sides as $n \rightarrow \infty$ forces $G(0) = 1/\overline{G(0)},$ or $|G(0)|^2 = 1.$ Since $\|G\|= 1,$ this can only happen if $G$ is a constant.  Thus non-constant inner functions generate proper invariant subspaces.

Moreoever, for any polynomial $p$, since $G$ is inner, $\langle (p - p(0))G, G \rangle = 0,$ or $\langle pG, G \rangle = p(0),$ that is, $\overline{G(0)} G(z)$ is the reproducing kernel at $0$ for the space $[G].$  Thus, by the above discussion, $G$ is a unimodular multiple of the extremal solution to problem \eqref{extremal}.
\end{proof}

Now let us examine how inner functions relate to optimal approximants.  The study of optimal approximants in the context of the Dirichlet-type
spaces $D_{\alpha}$
was initiated by the authors in \cite{BCLSS1}, who were interested in cyclic functions.
They proposed to examine cyclicity (in a
certain family of spaces) via the study of \emph{optimal polynomial
approximants}, that is, polynomials $p$ minimizing the norm
$\|pf-1\|$ among all polynomials in the space $\mathcal{P}_n$ of
polynomials of degree less or equal to $n$.  However, this definition
makes sense for \emph{all} functions in the space, not only the cyclic functions. Indeed, the theorem below indicates what these optimal approximants are for \emph{inner} functions.  If we denote by $\mathcal{P}_n$ the space of polynomials of degree less or
equal to $n$, it is clear that if $p_n^*$ is the optimal approximant of degree $n$, then $p^*_n f$ is the orthogonal
projection of 1 onto the space $\mathcal{P}_n f$.

\begin{thm}\label{main}   Let $f \in H$, not identically $0$, and for each $n$, let $p_n^*$ be the optimal approximant of degree $n$ of $1/f$.
\begin{itemize}
\item[(a)]  If $h$ is the orthogonal
projection of 1 onto $[f]$, then $h$ is the unique function such that $\|p^*_n f -h \| \rightarrow 0$ as $n \rightarrow \infty$.
\item[(b)] If $f$ is inner, then all the optimal approximants are constants: $p_n^*(z) = \overline{f(0)}.$
\item[(c)] If $f$ is inner, then $\|p_n^*f - 1 \|^2 = \dist^2_H (1, [f]) = 1 - |f(0)|^2.$
\end{itemize}
\end{thm}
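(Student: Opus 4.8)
The plan is to establish the three parts in order, since (c) rests on (a) and (b). For part (a), I would observe that $\{\mathcal{P}_n f\}_n$ is a nested, increasing sequence of finite-dimensional (hence closed) subspaces whose union is dense in $[f]$ by the very definition of $[f]$ as the closure of the polynomial multiples of $f$. Since $p_n^* f$ is exactly the orthogonal projection of $1$ onto $\mathcal{P}_n f$, as recorded just before the theorem, the standard Hilbert-space fact that orthogonal projections onto an increasing chain of subspaces converge in norm to the projection onto the closure of their union yields $p_n^* f = P_{\mathcal{P}_n f}(1) \to P_{[f]}(1) = h$. Uniqueness of the limit $h$ is automatic.

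The heart of the argument is part (b). The optimal approximant $p_n^*$ is characterized as the unique element of $\mathcal{P}_n$ whose residual is orthogonal to $\mathcal{P}_n f$, i.e. by the normal equations $\langle p_n^* f - 1, z^k f \rangle = 0$ for $k = 0, 1, \ldots, n$. I would simply check that the constant $c = \overline{f(0)}$ solves this system. Expanding, $\langle c f - 1, z^k f \rangle = c \langle f, z^k f \rangle - \langle 1, z^k f \rangle$. The inner condition $\langle z^k f, f \rangle = 0$ for $k > 0$ makes the first term vanish for $k > 0$, and a direct monomial computation gives $\langle 1, z^k f \rangle = 0$ for $k > 0$, so every equation with $k > 0$ reads $0 = 0$. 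For $k = 0$, using $\|f\| = 1$ and $\langle 1, f \rangle = \overline{f(0)}$, the equation becomes $c - \overline{f(0)} = 0$, which holds by our choice of $c$. By uniqueness of the optimal approximant, $p_n^*(z) \equiv \overline{f(0)}$ for every $n$.

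For part (c), I would combine (a) and (b). From (b) the residual is the fixed function $\overline{f(0)} f - 1$, so expanding $\|\overline{f(0)} f - 1\|^2 = |f(0)|^2 \|f\|^2 - 2\,\mathrm{Re}\bigl(\overline{f(0)}\langle f, 1 \rangle\bigr) + \|1\|^2$ and inserting $\|f\| = 1$, $\langle f, 1 \rangle = f(0)$, $\|1\| = 1$ gives the value $1 - |f(0)|^2$. For the distance, (a) and (b) together force $h = \overline{f(0)} f$; since $1 - h \perp h$, Pythagoras gives $\dist^2_H(1, [f]) = \|1 - h\|^2 = \|1\|^2 - \|h\|^2 = 1 - |f(0)|^2 \|f\|^2 = 1 - |f(0)|^2$, matching the residual norm.

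I expect the only real obstacle to be the bookkeeping in (b): one must pin down the inner-product convention on monomials carefully to see that innerness collapses all the $k > 0$ normal equations and that the lone $k = 0$ equation selects the constant $\overline{f(0)}$. Once (b) is secured, parts (a) and (c) are routine consequences of the geometry of projections in Hilbert space.
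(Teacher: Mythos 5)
Your proof is correct and follows essentially the same route as the paper: part (a) is the identical projection-convergence argument, and part (c) is the same computation (your Pythagoras check via $h=\overline{f(0)}f$ actually makes the identification of the residual norm with $\dist^2_H(1,[f])$ slightly more explicit than the paper does). In (b) you verify that the constant $\overline{f(0)}$ satisfies the normal equations and invoke uniqueness, whereas the paper notes that innerness forces the Gram matrix $M$ in \eqref{LinearApprox2} (and hence $M^{-1}$) to have trivial first row and column; these are the same observation in two guises.
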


\begin{proof}

To prove (a), notice that $\bigcup_{n \in \N}
\mathcal{P}_n f $ is dense in $[f]$. Since $H$ is a
Hilbert space, the orthogonal projection of 1 onto $\mathcal{P}_n
f$, $p^*_n f$, must converge to the orthogonal projection of 1 onto
$[f]$, that is, $h$. (This fact was already noticed by the authors in \cite{BKLSS2}.)

To prove (b),  notice that for an inner
function $f$, the matrix $M$ whose entries $M_{j,k}$ are given by \eqref{LinearApprox2} has zeros in all positions of the first
column and row (except for the first position, $j=k=0$, where we have
$M_{0,0}=1$). Therefore, the inverse $M^{-1}$ of $M$ also satisfies this property. This tells us that the optimal approximants $p^*_n$ (for all
$n \in \N$) are all the same constant, given by
\begin{equation}\label{PolysInner}
p_n^*(z) = p_0^*(z) = \overline{f(0)}.
\end{equation}
But then any $H$-inner functions $f$ and its corresponding optimal
polynomial approximants $p^*_n$ have  for all $n \in \N$ the
property that
\begin{equation}\label{eqn101}\|p_n^* f-1\|^2= 1-|f(0)|^2,\end{equation}
and by definition of $p_n^*$, this quantity equals $\dist^2_H (1, [f]),$ which proves (c).
\end{proof}

Note that functions in $H$ always admit a kind of a weak factorization, where the ``outer" factor can be expressed in terms of the optimal approximants.  More specifically, we have the following.

\begin{thm}\label{factorization}
 Let $f \in H$ with $f(0) \neq 0$ and for each $n$, let $p_n^*$ be the optimal approximants of degree $n$ of $1/f$.  Let $G$ be the solution to the extremal problem
\begin{equation}\label{ext2}
\sup \left\{ \mathrm{Re}\left( g(0) \right): g \in [f], \, \|g\| \leq 1 \right\}.
\end{equation}
Then there exists a function $F$, analytic in the disk, such that $f(z) = G(z)F(z)$, and
\[F(z)= \lim_{n \rightarrow \infty} \frac{\overline{f(0)}}{p_n^*(z)}.\]
\end{thm}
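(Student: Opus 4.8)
The plan is to obtain $F$ as a locally uniform limit extracted from Theorem~\ref{main}. By Theorem~\ref{main}(a), the orthogonal projection $h$ of $1$ onto $[f]$ satisfies $\|p_n^*f-h\|\to 0$. Since $H^2_\omega$ is a reproducing kernel Hilbert space, norm convergence forces locally uniform convergence on $\D$: from $|g(z)|\le\|g\|\,\|k_z\|$ together with $\|k_z\|^2=k_z(z)=\sum_k|z|^{2k}/\omega_k$ (see \eqref{eqn001}), which is finite for $|z|<1$ and locally bounded by \eqref{DefOmega}, one gets $\sup_K|g|\le C_K\|g\|$ on each compact $K\subset\D$. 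Hence $p_n^*(z)f(z)\to h(z)$ uniformly on compact subsets of $\D$. Recalling from Theorem~\ref{Thm1} that $G=h/\sqrt{h(0)}$, so that $h=\sqrt{h(0)}\,G$ with $G$ inner, I would then read off $F$ from the reciprocals $\overline{f(0)}/p_n^*$.

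Next I would identify the limit on $\D\setminus Z(h)$, where $Z(\cdot)$ denotes the zero set in $\D$. A preliminary observation is that $Z(f)\subseteq Z(h)$, with matching multiplicities at least: by the locally uniform convergence just established and Weierstrass's theorem on derivatives, every locally uniform limit of the multiples $p_n^*f$, and in particular $h$, vanishes at each point of $Z(f)$ to at least the order of $f$ there. Consequently $f$ is zero-free on $\D\setminus Z(h)$. Fix a compact $K\subset\D\setminus Z(h)$; there $h$ is bounded away from $0$, so by uniform convergence $p_n^*f$ is bounded away from $0$ on $K$ for large $n$, and hence so is $p_n^*$. Writing $1/p_n^*=f/(p_n^*f)$ then gives $\overline{f(0)}/p_n^*\to\overline{f(0)}\,f/h$ uniformly on $K$. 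Thus the limit in the statement exists and defines an analytic function $F$ on $\D\setminus Z(h)$, and a one-line computation using $G=h/\sqrt{h(0)}$ pins down the multiplicative constant and yields $f=GF$ on that set.

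The crux is to extend $F$ analytically across $Z(h)$, that is, to show that the singularities of $f/h$ there are removable; this is equivalent to the assertion that $G$ has no zeros in $\D$ beyond those of $f$ and to no higher order. Identifying $h$ with the reproducing kernel $K_{[f]}(\cdot,0)$ (as in the proof of Theorem~\ref{Thm1}) shows that $h$ vanishes on the common zero set of $[f]$, which is exactly $Z(f)$; this gives the easy inclusion used above. The genuinely hard direction is the reverse, namely that the extremal inner function $G$ carries no \emph{extraneous} zeros, and I expect this to be the main obstacle. It is the abstract analogue of the contractive-divisor phenomenon in the Hardy, Bergman, and Dirichlet spaces, and I would derive it from the extremal characterization of $G$: a zero of $G$ off $Z(f)$, or of too high an order, should permit one to divide it out and strictly decrease the extremal norm in \eqref{ext2}, contradicting minimality. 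Granting this, $f/h$ has only removable singularities, $F$ extends to an analytic function on all of $\D$, the identity $f=GF$ persists by continuity, and uniqueness of $F$ is immediate since $G\not\equiv 0$.
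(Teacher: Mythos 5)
Your argument tracks the paper's own proof for most of its length: the paper likewise starts from Theorem~\ref{main}(a), upgrades the norm convergence $\|p_n^*f-h\|\to 0$ to pointwise convergence via the reproducing kernels, writes $G=h/\sqrt{h(0)}$ using Theorem~\ref{Thm1}, and reads $F$ off from $\lim_n p_n^*GF$. Your rendering of this portion is, if anything, more careful than the paper's (locally uniform convergence on compacts, multiplicities at the zeros of $f$).

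The problem is the step you yourself call ``the crux'' and then only grant: that $G$ has no zeros in $\D$ beyond those of $f$, and to no higher order, so that $f/G$ is analytic. As submitted, your proof is incomplete exactly there, and the repair you sketch --- divide out an extraneous zero of $G$ and strictly improve the extremal value --- is the contractive-divisor/expansive-multiplier phenomenon, which is a genuinely hard theorem even in the unweighted Bergman space \cite{Hedenmalm} and is not established for a general weight satisfying only \eqref{DefOmega}; it cannot be invoked as a routine deduction. The paper disposes of this point in a single sentence, asserting that the containment between $[G]$ and $[f]$ forces every zero of $G$ to be a zero of $f$; note, however, that $G\in[f]$ by itself only yields the inclusion of the zero set of $f$ into that of $G$ --- the inclusion you already proved --- whereas it is the reverse inclusion that is needed. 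So your instinct that the real content of the theorem is concentrated here is sound, but an instinct followed by ``granting this'' is not a proof. Separately, your closing ``one-line computation'' should actually be written out: the limit you identify equals $\overline{f(0)}f/h=\bigl(\overline{f(0)}/\sqrt{h(0)}\bigr)\,f/G$, and $\sqrt{h(0)}=\|h\|$ need not coincide with $\overline{f(0)}$ (already for $f\equiv 2$ in $H^2$ one has $h\equiv 1$), so the normalizing constant has to be tracked explicitly rather than declared to come out right.
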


\begin{proof}  Suppose $f \in H$, and let $G$ be the solution to the extremal problem \eqref{ext2}.  Then $[G] \subset [f]$, and therefore all the zeros of $G$ must also be zeros of $f$, and so $F: = f / G$ is an analytic function in the disk. Letting $p_n^*$ be the optimal approximants of degree $n$ of $1/f$, by Theorem \ref{main}, $\|p^*_n f -h \| \rightarrow 0$ as $n \rightarrow \infty$, where $h$ is the orthogonal
projection of 1 onto $[f]$.  By theorem \ref{Thm1}, $G(z) = h(z) /\sqrt{h(0)}.$  Therefore, since norm convergence implies pointwise convergence,
$$ \lim_{n \rightarrow \infty} p_n^*(z) G(z) F(z) = \sqrt{h(0)} G(z) = \overline{f(0)} G(z),$$ and therefore the conclusion follows.
\end{proof}

\begin{rem}
It is natural to ask whether the function $F$ appearing in Theorem \ref{factorization} is truly ``outer", in the sense of being cyclic in $H$; in particular, we would require that $F\in H$. This, unfortunately, need not be the case in general: it is known, see \cite{HZ}, that factorization fails in some weighted Bergman spaces. While $G$ has precisely the same zeros as $f$ {\it inside} the disk, it may well happen that $G(\zeta)=0$ for some $\zeta \in \mathbb{T}$, leading to rapid growth in $F$ at this boundary point, thus preventing $F$ from belonging to $H$. See \cite{HZ} for further details.
\end{rem}

\section{Shapiro-Shields Functions and distances to invariant subspaces generated by inner functions}\label{DistInv}
We now describe a method for constructing inner functions in the setting of a general $H^2_{\omega}$-space. The functions in question can be viewed as
analogues of finite Blaschke products in the Hardy space or of contractive divisors associated with finite zero sets in the Bergman space.

Let $Z=\{z_i\}_{i=1}^n$ be a collection of $n$ distinct points in $\D \backslash \{0\}$ and let $k_{w}$ denote the reproducing kernel of $H=H^2_\omega$ at the point $w\in \mathbb{D}$. We define $K_Z$ to be the $n\times n$ matrix with elements given by
\begin{equation}\label{MatrixK}
K_{i,j}=\langle k_{z_j}, k_{z_i} \rangle,
\end{equation}
for $i,j  =1,...,n$. In what follows, we write $|M|$ for the determinant of a square matrix $M$.

In their study of the classical Dirichlet space and its zero sets,
Shapiro and Shields \cite{ShaShi} exhibited functions that have a
prescribed finite set of zeros in the disk and maximize a certain
functional, and a modification of their construction yields inner
functions. With a standard Hilbert space argument, their
construction can be extended to infinite zero sets (see \cite{ShaShi}, and see for instance
\cite{EFKMR} for further developments).

\begin{defn}\label{ShaShiFunct}
Let $Z=\{z_i\}_{i=1}^n\subset \mathbb{D}$, let $k_{z_i}$ denote the reproducing kernel at $z_i$ in $H^2_{\omega}$, and set
\begin{equation}\label{FunctionF} f_Z(z):=
\begin{vmatrix}
1 & 1 &  \cdots  & 1 \\
\left(k_{z_i}(z)\right)_{i=1}^n & & K_Z &
\end{vmatrix}.\end{equation}
We define the \emph{Shapiro-Shields function} in
$H^2_\omega$, for the set $Z$, as
\begin{equation}\label{FunctionG}
g_Z(z):=\frac{f_Z(z)}{\|f_Z\|_{\omega}}.
\end{equation}
\end{defn}
It is readily seen that $g_Z(z_i)=0$ for $i=1,\ldots, n$ since the
first and $(i+1)$th column in the determinant defining $f_Z$ are
then equal. In order to show that Shapiro-Shields functions $g_Z$
are indeed $H$-inner, we need two auxiliary results.

\begin{lemma}\label{NormofF}
$$\|f_Z\|^2=\overline{|K_Z|}f_Z(0).$$
\end{lemma}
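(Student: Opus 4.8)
The plan is to expand the $(n+1)\times(n+1)$ determinant defining $f_Z$ along its first column and thereby write $f_Z$ as a linear combination of the reproducing kernels $k_{z_1},\ldots,k_{z_n}$ together with the constant function $1$. Concretely, cofactor expansion along the first column (whose entries are $1, k_{z_1}(z), \ldots, k_{z_n}(z)$) yields
\[
f_Z(z) = |K_Z| + \sum_{i=1}^n \alpha_i\, k_{z_i}(z),
\]
where the $\alpha_i$ are the (constant, $z$-independent) cofactors of the entries $k_{z_i}(z)$, and the coefficient of the constant term is exactly the $(1,1)$-cofactor: deleting the all-ones top row and the first column leaves precisely the matrix $K_Z$, so this coefficient is $|K_Z|$.

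The crucial observation is that, since $\omega_0 = 1$, formula \eqref{eqn001} gives $k_0 \equiv 1$; that is, the constant function is the reproducing kernel at the origin. Hence the expansion above exhibits $f_Z$ as a genuine linear combination of reproducing kernels, $f_Z = |K_Z|\,k_0 + \sum_{i=1}^n \alpha_i k_{z_i}$. To compute $\|f_Z\|^2$, I would pair $f_Z$ with this representation and invoke the reproducing property \eqref{RepProp}: for each $i \geq 1$ we have $\langle f_Z, k_{z_i}\rangle = f_Z(z_i) = 0$, because $f_Z$ vanishes at every $z_i$ (two columns of the determinant coincide there), while $\langle f_Z, k_0\rangle = f_Z(0)$. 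Since the inner product is conjugate-linear in its second slot, all the $k_{z_i}$-terms drop out and only the constant term survives:
\[
\|f_Z\|^2 = \langle f_Z, f_Z\rangle = \overline{|K_Z|}\,\langle f_Z, k_0\rangle + \sum_{i=1}^n \overline{\alpha_i}\,\langle f_Z, k_{z_i}\rangle = \overline{|K_Z|}\, f_Z(0),
\]
which is the claimed identity.

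I do not anticipate a serious obstacle here; the argument is a short blend of cofactor expansion and the reproducing property. The only points requiring care are bookkeeping ones: correctly identifying the $(1,1)$-cofactor as $|K_Z|$ with the right sign, and recognizing the constant function as $k_0$ so that the whole of $f_Z$ lies in the span of reproducing kernels, which is what makes the pairing collapse so cleanly. The conjugate on $|K_Z|$ arises automatically from the conjugate-linearity of $\langle\cdot,\cdot\rangle$ in its second argument; one may note in passing that $K_Z$ is Hermitian, so $|K_Z|$ is in fact real and the conjugate is merely cosmetic.
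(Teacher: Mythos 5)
Your proposal is correct and follows essentially the same route as the paper: a cofactor expansion of $f_Z$ along the first column (the paper's matrices $B_t$ are exactly your cofactors, up to sign), followed by pairing $f_Z$ with that expansion and killing every term except the constant one via the reproducing property and the vanishing of $f_Z$ at each $z_i$. Your identification of the constant function as $k_0$ is a harmless cosmetic variant of the paper's direct observation that $\langle f_Z,1\rangle=f_Z(0)$.
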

\begin{proof}
Consider the matrices
$\left(B_t\right)_{t=0}^n$ where
\begin{equation}\label{MatrixBt}
(B_t)_{i,j}= \begin{cases} K_{i,j} & \text{if }i \neq t \\ 1 &
\text{otherwise.}
\end{cases}
\end{equation}
In particular, $B_0=K_Z$.

We first expand the determinant defining $f_Z$ in terms of the $B_t$'s:
\begin{equation}\label{eqn301}
f_Z(z) = |B_0| + \sum_{t=1}^n k_{z_t}(z) |B_t| (-1)^t.
\end{equation}

This expresses the norm $\|f_Z\|$ in terms of a useful linear combination:
$$\langle f_Z , f_Z \rangle = \overline{|B_0|} \langle f_Z , 1 \rangle
+ \sum_{t=1}^n \overline{|B_t|} (-1)^t \langle f_Z , k_{z_t}
\rangle.$$ Two observations finish the proof: first, by the
definition of the norm in $H$, $\langle f_Z , 1 \rangle = f_Z(0)$
and second, by the reproducing property of the kernels, $\langle f_Z
, k_{z_t} \rangle = f_Z(z_t)$ is the determinant of a matrix where
two columns ($j=0$ and $j=t$) are identical, and hence $f_Z(z_t)=0$.
\end{proof}

The previous Lemma tells us, in particular, that $g_Z$ is well
defined provided $|B_0|\neq 0$. The next Lemma shows that this is indeed the case.
\begin{lemma}\label{Invertible}
Let $Z =\{z_i\}_{i=1}^n \subset \D$ be a set of $n$
distinct points. Then $K_Z$ is invertible.
\end{lemma}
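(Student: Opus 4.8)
The plan is to recognize $K_Z$ as (essentially) the Gram matrix of the reproducing kernels $k_{z_1},\ldots,k_{z_n}$, and thereby reduce invertibility to the linear independence of these kernels. By the reproducing property \eqref{RepProp}, the entries satisfy $K_{i,j}=\langle k_{z_j}, k_{z_i}\rangle = k_{z_j}(z_i)$, so if we let $G$ be the Gram matrix with entries $G_{i,j}=\langle k_{z_i},k_{z_j}\rangle$, then $K_Z=\overline{G}$, the entrywise conjugate of $G$ (equivalently $G^T$, since the Gram matrix is Hermitian). In particular $K_Z$ is invertible if and only if $G$ is, and the standard theory of Gram matrices tells us that $G$ is nonsingular precisely when $k_{z_1},\ldots,k_{z_n}$ are linearly independent in $H$.

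Thus the heart of the matter is to prove that reproducing kernels at distinct points are linearly independent. Suppose $\sum_{i=1}^n c_i k_{z_i}=0$ for some scalars $c_i$. Pairing this identity with an arbitrary $g\in H$ and invoking \eqref{RepProp} yields $\sum_{i=1}^n \overline{c_i}\,g(z_i)=0$ for every $g\in H$. Since the points $z_i$ are distinct, I will exhibit, for each fixed index $m$, a Lagrange interpolation polynomial $g_m$ with $g_m(z_i)=\delta_{i,m}$. Polynomials lie in $H$ because of the weight condition \eqref{DefOmega}, so substituting $g=g_m$ forces $\overline{c_m}=0$. As $m$ is arbitrary, every $c_i$ vanishes, and the kernels are linearly independent.

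Combining the two steps, $G$ is invertible, hence so is $K_Z$. I do not anticipate a genuine obstacle here; the only points requiring care are the bookkeeping with the conjugate-linear slot of the inner product (so that the identification $K_Z=\overline{G}$ and the linear-independence computation remain consistent), and the verification that the interpolating polynomials separating the points genuinely belong to $H$, which is guaranteed by \eqref{DefOmega}.
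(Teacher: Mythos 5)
Your proposal is correct and follows essentially the same route as the paper: identify $K_Z$ as a Gram matrix, reduce invertibility to linear independence of the kernels $k_{z_1},\ldots,k_{z_n}$, and establish that independence by pairing a putative linear relation against Lagrange interpolating polynomials, which lie in $H$ since polynomials do. The only cosmetic difference is that the paper assumes a dependence of the form $k_{z_n}=\sum_{i=1}^{n-1}\lambda_i k_{z_i}$ and tests against a single interpolating polynomial, while you handle a general vanishing combination and test against all of them.
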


\begin{proof}
If $n=1$, $K_Z=k_{z_1}(z_1)=\|k_{z_1}\|^2>0$.

Suppose $n \geq 2$. From \eqref{MatrixK} we see that
$K_Z$ is a Gram matrix, and therefore its determinant is non-zero if and only if the kernels $\{k_{z_i}\}_{i=1}^n$ are
linearly independent.
Seeking to arrive at a contradiction, let us assume linear independence fails. Without loss of
generality, suppose
\begin{equation}\label{lindep}
k_{z_n}=\sum_{i=1}^{n-1} \lambda_i k_{z_i}.\end{equation}
Let $L$ be the Lagrange interpolating polynomial that is equal to $1$ at $z_n$ and vanishes at $z_i$ for $1 \leq i \leq n-1.$ Taking the inner product of $L$ with the left hand side of \eqref{lindep} then gives $1$, while the inner product of $L$ with the right hand side of \eqref{lindep} gives $0$, a contradiction. 
\end{proof}

We arrive at the main result of this Section.
\begin{thm}\label{Blaschke}
Let $Z =\{z_i\}_{i=1}^n \subset \D \backslash \{0\}$ be a set of $n$
distinct points. Then the function $g_Z$ is $H$-inner and given by
\begin{equation}\label{eqn308}g_Z(z)=\frac{f_Z(z)}{\sqrt{\overline{|K_Z|}f_Z(0)}}.\end{equation}
\end{thm}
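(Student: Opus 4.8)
The plan is to establish two things: that $g_Z$ has unit norm and the explicit formula \eqref{eqn308}, and that $g_Z$ satisfies the inner orthogonality relations \eqref{Hinner1}. The formula \eqref{eqn308} is essentially immediate from the preceding lemmas: by Lemma \ref{Invertible}, $K_Z$ is invertible so $|K_Z| = |B_0| \neq 0$, which guarantees (via Lemma \ref{NormofF}) that $\|f_Z\|^2 = \overline{|K_Z|} f_Z(0)$ is nonzero and $g_Z$ is well defined. Dividing $f_Z$ by $\|f_Z\| = \sqrt{\overline{|K_Z|} f_Z(0)}$ then gives precisely \eqref{eqn308}, and by construction $\|g_Z\|_\omega = 1$. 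So the content of the theorem reduces to verifying the orthogonality conditions.

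**The main work** is showing $\langle z^j g_Z, g_Z \rangle = 0$ for all $j \geq 1$, equivalently $\langle z^j f_Z, f_Z \rangle = 0$. The natural approach is to expand the inner product using the determinant expansion \eqref{eqn301} from the proof of Lemma \ref{NormofF}, writing
\begin{equation}\label{innerexp}
\langle z^j f_Z, f_Z \rangle = \overline{|B_0|} \langle z^j f_Z, 1 \rangle + \sum_{t=1}^n \overline{|B_t|} (-1)^t \langle z^j f_Z, k_{z_t} \rangle.
\end{equation}
The second type of term is the promising one: by the reproducing property, $\langle z^j f_Z, k_{z_t} \rangle = z_t^j f_Z(z_t)$, and we already know $f_Z(z_t) = 0$ because evaluating the defining determinant \eqref{FunctionF} at $z = z_t$ makes its first and $(t+1)$th columns coincide. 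So every term in the sum vanishes. The first term requires $\langle z^j f_Z, 1 \rangle$, which by the definition of the $\omega$-norm picks out (a multiple of) the $j$th Taylor coefficient of $f_Z$; one would like this to vanish too, but there is no reason for individual Taylor coefficients of $f_Z$ to be zero.

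**The key obstacle** is therefore the term $\overline{|B_0|}\langle z^j f_Z, 1\rangle$, which does not obviously vanish — and this signals that the naive expansion is not quite the right bookkeeping. The resolution I would pursue is to instead exploit that $\langle z^j f_Z, k_{z_t}\rangle = z_t^j f_Z(z_t) = 0$ should be applied to a cleverly chosen linear combination that reconstructs $z^j f_Z$ paired against $f_Z$, or alternatively to observe that the whole point of the Shapiro--Shields determinant is that $f_Z$ is, up to normalization, the reproducing kernel of the subspace $M = [f_Z]$ at $0$ intersected with the zero-set constraints. Concretely, I expect that $f_Z$ agrees up to a constant with the orthogonal projection $h$ of $1$ onto the subspace $M_Z = \{g \in H : g(z_i) = 0,\ i = 1,\dots,n\}$, since $f_Z$ vanishes on $Z$ and the determinant structure encodes orthogonality of $1 - f_Z/c$ to that subspace. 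Once $f_Z$ is identified (up to a unimodular scalar) as such a projection-type extremal function, Theorem \ref{Thm1} applies directly: the orthogonal projection of $1$ onto a $z$-invariant subspace, suitably normalized, is automatically $H$-inner. (Note $M_Z$ is $z$-invariant precisely because the $z_i$ are nonzero, which is where the hypothesis $Z \subset \D \setminus \{0\}$ enters.)

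**Thus the cleanest route** is: first verify the formula \eqref{eqn308} as above; then show $M_Z$ is a closed $z$-invariant subspace containing a function nonvanishing at $0$ (using $0 \notin Z$); then argue that $f_Z$ is, up to a constant, the reproducing kernel $K_{M_Z}(\cdot, 0)$ — equivalently that $\langle f_Z, g\rangle = \overline{g(0)}\,\overline{c}$ fails, so more carefully that $1 - f_Z/f_Z(0) \perp M_Z$, by checking it against the spanning kernels and using the vanishing $f_Z(z_i)=0$; and finally invoke Theorem \ref{Thm1} to conclude $g_Z = f_Z/\|f_Z\|$ is $H$-inner. The hard part is the identification of $f_Z$ with the projection onto $M_Z$, which is exactly where the determinantal miracle of the Shapiro--Shields construction does its work and where care with the role of $0 \notin Z$ is essential.
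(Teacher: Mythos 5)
There is a genuine error in your computation of the term you call the key obstacle, and it causes you to abandon what is in fact the paper's (complete and short) argument. In $H^2_\omega$ the inner product against the constant function $1$ picks out the \emph{zeroth} Taylor coefficient of the first argument: since $1=\sum_k b_k z^k$ with $b_0=1$ and $b_k=0$ for $k\geq 1$, we have $\langle g,1\rangle = a_0(g)\,\omega_0 = g(0)$. Hence $\langle z^j f_Z, 1\rangle$ is not ``a multiple of the $j$th Taylor coefficient of $f_Z$'' (that would be $\langle f_Z, z^j\rangle = a_j\omega_j$); it is the constant coefficient of $z^j f_Z$, which vanishes for every $j\geq 1$ because $z^j f_Z$ has a zero of order at least $j$ at the origin. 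With that correction, the expansion you wrote down --- identical to the paper's equation \eqref{eqn309} --- finishes the proof at once: the kernel terms vanish because $f_Z(z_t)=0$, and the remaining term $\overline{|B_0|}\langle z^j f_Z,1\rangle$ vanishes for the reason just given. Your treatment of the norm identity and of formula \eqref{eqn308} via Lemmas \ref{NormofF} and \ref{Invertible} is correct and matches the paper.

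The alternative route you then sketch --- showing that $f_Z$ is, up to a constant, the orthogonal projection of $1$ onto $M_Z=\{g\in H: g(z_i)=0,\ i=1,\dots,n\}$ and invoking Theorem \ref{Thm1} --- is viable in principle: $M_Z^{\perp}=\mathrm{span}\{k_{z_1},\dots,k_{z_n}\}$, so the projection $h$ has the form $1-\sum_i c_i k_{z_i}$ with the $c_i$ determined by the interpolation conditions $h(z_j)=0$, and Cramer's rule matches this against the cofactor expansion \eqref{eqn301} of the Shapiro--Shields determinant. But you explicitly defer exactly this identification (``the hard part \dots where the determinantal miracle does its work''), so as written the proposal does not contain a proof by either route. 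The fix is simply to undo the miscomputation and complete the direct argument you started with.
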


\begin{proof}
By the definition of $g_Z$ and by Lemma \ref{NormofF}, we have $\|g_Z\|=1$. It remains to verify that
\begin{equation}\label{eqn307}\langle z^k f_Z , f_Z\rangle =0, \quad k=1,2,\ldots.
\end{equation}

From the definition of $f_Z$ in \eqref{FunctionF} we see that
\begin{equation}\label{eqn309}
\langle z^k f_Z , f_Z \rangle = \overline{|B_0|} \langle z^k f_Z , 1
\rangle + \sum_{t=1}^{n} \overline{|B_t|} (-1)^t \langle z^k f_Z ,
k_{z_t} \rangle.
\end{equation}
The reproducing property of the kernel implies that $$\langle z^k f_Z
, k_{z_t} \rangle = z_t^k f_Z(z_t) =0,$$ where the second identity
comes from evaluating $f_Z(z_t)$ as a determinant, with columns
$j=0$ and $j=t$ being the same. Finally, observe that $\langle
z^k f_Z , 1 \rangle =0$ for all $k \geq 1$ by the definition of the
norm in $H$.
\end{proof}

Property (c) in Theorem \ref{Thm1} relates inner functions to
distances to invariant subspaces. Let us now
compute the distance from the function 1 to the invariant
subspace generated by a polynomial with zero set $Z=\{z_1,\ldots, z_n\}\subset \D\setminus \{0\}$.
Denote by $K_Z^{-1}$ the inverse of the matrix $K_Z$ and let $v^*$ be the transpose conjugate
of a vector $v$.

\begin{thm}\label{DistThm}
Let $Z\subset \D \backslash \{0\}$ be finite, and let
$\dist_H(1,[g_Z])$ denote the distance from $1$ to $[g_Z]$ in $H$,
where $g_Z$ is the Shapiro-Shields function associated with $Z$ and
let $K_Z$ be defined as in \eqref{MatrixK}.

Then, letting $v=(1,\ldots,1)$, we have
\begin{equation}\label{DistThm1}
\mathrm{dist}^2_H(1,[g_Z])=vK_Z^{-1}v^*.
\end{equation}
\end{thm}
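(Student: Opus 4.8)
The plan is to combine the two tools already assembled in this section: part (c) of Theorem \ref{main}, which gives $\mathrm{dist}^2_H(1,[g_Z]) = 1 - |g_Z(0)|^2$ because $g_Z$ is $H$-inner (Theorem \ref{Blaschke}), together with the explicit determinantal formula \eqref{eqn308} for $g_Z$. So the whole problem reduces to computing $|g_Z(0)|^2 = |f_Z(0)|^2/(\,\overline{|K_Z|}\,f_Z(0)\,)$ and showing that $1 - |g_Z(0)|^2$ collapses to the quadratic form $vK_Z^{-1}v^*$. Since $f_Z(0)$ is a determinant with real-valued structure coming from a Gram matrix, I expect $\overline{|K_Z|} = |K_Z|$ and $\overline{f_Z(0)} = f_Z(0)$ (the Gram matrix $K_Z$ is Hermitian, hence has real determinant, and $f_Z(0)$ will turn out to be real and positive), so that $|g_Z(0)|^2 = f_Z(0)/|K_Z|$.

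First I would evaluate $f_Z(0)$ explicitly from the determinant \eqref{FunctionF}. Setting $z=0$ in the first column replaces each entry $k_{z_i}(z)$ by $k_{z_i}(0)$. By \eqref{eqn001}, $k_{z_i}(0) = 1$ for every $i$ (only the $k=0$ term survives, and $\omega_0 = 1$). Thus $f_Z(0)$ is the determinant of the bordered matrix
\begin{equation}\label{eqn-border}
f_Z(0) =
\begin{vmatrix}
1 & 1 & \cdots & 1 \\
1 & & & \\
\vdots & & K_Z & \\
1 & & &
\end{vmatrix},
\end{equation}
that is, the $(n+1)\times(n+1)$ matrix with $K_Z$ in the lower-right block and the all-ones vector $v = (1,\ldots,1)$ bordering it in the top row and left column, with a $1$ in the $(0,0)$ corner.

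The key algebraic step is the Schur-complement expansion of this bordered determinant. Since $K_Z$ is invertible by Lemma \ref{Invertible}, the standard block-determinant identity gives
\begin{equation}\label{eqn-schur}
f_Z(0) = |K_Z| \cdot \left( 1 - v K_Z^{-1} v^* \right).
\end{equation}
Dividing by $|K_Z|$ yields $|g_Z(0)|^2 = f_Z(0)/|K_Z| = 1 - vK_Z^{-1}v^*$, and therefore
$1 - |g_Z(0)|^2 = vK_Z^{-1}v^*$, which is exactly \eqref{DistThm1} once combined with part (c) of Theorem \ref{main}. I would note in passing that the quantity $vK_Z^{-1}v^*$ is automatically real and nonnegative, consistent with its being a squared distance, because $K_Z^{-1}$ is Hermitian positive-definite.

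The main obstacle is not the Hilbert-space input but the careful bookkeeping in the block-determinant step \eqref{eqn-schur}: one must track the ordering of rows and columns in the definition \eqref{FunctionF} of $f_Z$ so that the border vectors genuinely are $v$ and $v^*$ and not some permuted or conjugated variant, and confirm that the $(0,0)$ corner entry is indeed the scalar $1$ that makes the Schur complement read $1 - vK_Z^{-1}v^*$ rather than $c - vK_Z^{-1}v^*$ for some other constant $c$. The reality of $f_Z(0)$ and of $|K_Z|$ (so that the absolute value and conjugation in \eqref{eqn308} can be discarded cleanly) also deserves an explicit sentence, since it is what lets $|g_Z(0)|^2$ equal the ratio $f_Z(0)/|K_Z|$ without stray complex factors.
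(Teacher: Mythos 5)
Your proposal is correct and follows essentially the same route as the paper: both reduce the claim to computing $1-|g_Z(0)|^2$ via Theorem \ref{main}(c) and Lemma \ref{NormofF}, identify $f_Z(0)$ as the bordered determinant with $K_Z$ in the lower-right block and the all-ones vector on the border, and extract the quadratic form $vK_Z^{-1}v^*$. The only difference is in the last step, where you invoke the Schur-complement block-determinant identity directly, while the paper reaches the same conclusion by subtracting the determinant $\left|\begin{smallmatrix} 1 & v \\ 0 & K_Z\end{smallmatrix}\right|=|K_Z|$ from the bordered one and then applying Cramer's rule; your packaging is a cleaner statement of the identical computation.
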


\begin{proof} By Lemma
\ref{Invertible}, the matrix $K_Z$ is invertible and hence the right-hand side of \eqref{DistThm1} is well defined.

As mentioned in Theorem \ref{main}, the distance from the
function 1 to the invariant subspace generated by an inner function
$f \in H$ is given by $\sqrt{1-|f(0)|^2}$.

With the notation from \eqref{FunctionG}, this means that we just
need to compute $1-|g_Z(0)|^2$. From the definition of $g_Z$ and
Lemma \ref{NormofF} it is clear that
\begin{equation}\label{eqn400}
|g_Z(0)|^2= \frac{|f_Z(0)|^2}{\|f_Z\|^2} =
\frac{\overline{f_Z(0)}}{\overline{|K_Z|}}.
\end{equation}
Since $|g_Z(0)| \in \R$, we can ignore the conjugation. From now on,
denote by $v$ an $n$-vector with coordinates $(1,...,1)$, and $v^*$
its transpose (as in the statement of the Theorem). Bearing in mind
that $k_{z}(0)=1$ for any $z \in \D$, we obtain
\begin{equation}\label{eqn401}
1-|g_Z(0)|^2= 1 - \frac{\begin{vmatrix} 1 & v \\ v^* & K_Z
\end{vmatrix}}{|K_Z|}.
\end{equation}

Notice that $|K_Z|=\begin{vmatrix} 1 & v \\ 0 & K_Z
\end{vmatrix}$. This, applied to both the denominator and the numerator of the right-hand side of
\eqref{eqn401}, shows that
\begin{equation}\label{eqn402}
1-|g_Z(0)|^2 = \frac{\begin{vmatrix} 1 & v \\ 0 & K_Z\end{vmatrix}
-\begin{vmatrix} 1 & v \\ v^* & K_Z\end{vmatrix}}{\begin{vmatrix} 1
& v \\ 0 & K_Z\end{vmatrix}},
\end{equation}
and by elementary linear algebra, this yields
\begin{equation}\label{eqn403}
1-|g_Z(0)|^2 = \frac{\begin{vmatrix} 0 & v \\ -v^* &
K_Z\end{vmatrix}}{\begin{vmatrix} 1 & v \\ 0 & K_Z\end{vmatrix}}.
\end{equation}

By Lemma
\ref{Invertible}, the system
\begin{equation}\label{eqn404}
\begin{pmatrix} 1 & v \\ 0 & K_Z\end{pmatrix} c = \begin{pmatrix} 0 \\ -v^*\end{pmatrix}
\end{equation}
has a unique solution $c:=(c_0, \ldots, c_n)^T$ and, by applying Cramer's rule, we see that the left-hand side of
\eqref{eqn403} is equal to $c_0$.

The first equation in \eqref{eqn404} tells us that
\begin{equation}\label{eqn405}c_0 = -\sum_{j=1}^n c_j = -v \cdot c, \end{equation}
while the rest can be expressed in the simple form $K_Z c = -v^*$.
Applying Lemma \ref{Invertible}, we obtain
\begin{equation}\label{eqn407} c= -K_Z^{-1} v^*. \end{equation}
Substituting the value of $c_0$ obtained in \eqref{eqn405} into
\eqref{eqn407} finishes the proof.
\end{proof}

\section{Examples}
\begin{ex}[Zero-based invariant subspaces in $H^2$]
The Shapiro-Shields functions associated with $Z=\{z_1\}$, a singleton, are straight-forward to compute. In the case of the Hardy space $H^2$, the reproducing kernel is the Szeg\H{o} kernel
\[k_{z_1}(z)=\frac{1}{1-\bar{z}_1z},\]
and hence
\[f_Z(z)=\frac{1}{1-|z_1|^2}-\frac{1}{1-\bar{z}_1z}=\frac{\bar{z}_1}{1-|z_1|^2}\frac{z_1-z}{1-\bar{z}_1z}.\]
After normalizing, we obtain
\[g_Z(z)=\frac{\bar{z}_1}{|z_1|}\frac{z_1-z}{1-\bar{z}_1z},\]
a classical Blaschke factor. The associated distance is
\[\mathrm{dist}_{H^2}^2(1,[g_{\{z_1\}}])=1-|z_1|^2.\]

Let us turn to zero sets containing two points, $Z=\{z_1,z_2\}$. After a somewhat lengthy computation that can be carried out by hand or using computer algebra, we obtain
\[f_Z(z)=\frac{1}{(1-|z_1|^2)(1-|z_2|^2)}\frac{|z_1-z_2|^2}{|1-\bar{z}_1z_2|^2}\left(\bar{z}_1\frac{z_1-z}{1-\bar{z}_1z}\right)
\left(\bar{z}_2\frac{z_2-z}{1-\bar{z}_2z}\right).\]
Note that, at this stage, it is not clear that the Shapiro-Shields function factors as a product of singleton Shapiro-Shields functions. After normalizing as in Theorem \ref{Blaschke}, however, we do arrive at
\[g_Z(z)=\left(\frac{\bar{z}_1}{|z_1|}\frac{z_1-z}{1-\bar{z}_1z}\right)\left(\frac{\bar{z}_2}{|z_2|}\frac{z_2-z}{1-\bar{z}_2z}\right),\]
a Blaschke product. It is immediate that
\[\mathrm{dist}_{H^2}(1,[g_{\{z_1,z_2\}}])=1-|z_1z_2|^2.\]
\end{ex}

\begin{ex}[Zero-based invariant subspaces in $A^2$]
We next turn to the Bergman space, whose reproducing kernel is
\[k_{z_1}(z)=\frac{1}{(1-\bar{z}_1z)^2}.\]
Computing the corresponding $2\times 2$-determinant in the definition of $f_Z$ for the Bergman space, we find that
\[f_Z(z)=\frac{\bar{z}_1}{(1-|z_1|^2)^2}\frac{z_1-z}{1-\bar{z}_1z}\left(\frac{2-\bar{z}_1z-|z_1|^2}{1-\bar{z}_1z}\right).\]
Normalization as in Theorem \ref{Blaschke} gives us
\[g_Z(z)=\frac{1}{\sqrt{2-|z_1|^2}}\frac{\bar{z}_1}{|z_1|}\frac{z_1-z}{1-\bar{z}_1z}\left(\frac{2-\bar{z}_1z-|z_1|^2}{1-\bar{z}_1z}\right).\]
This recovers the well-known single-point extremal function for the Bergman space, see \cite[p.56]{Hedenmalm}. In terms of distances,
\[\mathrm{dist}_{A^2}^2(1,[g_{\{z_1\}}])=1-|z_1|^2(2-|z_1|^2)=(1-|z_1|^2)^2.\]
It is a priori clear that the distance to a single-zero invariant
subspace is smaller in the Bergman space than in $H^2$, but the
above computations give us a quantitative comparison.

The invariant subspace generated by two simple zeros can also be handled. The function $f_Z$ can be expressed as a linear combination of kernels, cf. also \cite{Hedenmalm}.
Moreover,
\[K_Z=\frac{1}{(1-|z_1|^2)^2(1-|z_2|^2)^2}\frac{|z_1-z_2|^2}{|1-z_1\bar{z}_2|^4}(2-|z_1+z_2|^2+2|z_1z_2|^2).\]
and
\begin{multline*}
f_Z(0)=\frac{1}{(1-|z_1|^2)^2(1-|z_2|^2)^2}\big(1-(1-|z_1|^2)^2-(1-|z_2|^2)^2\\+(1-|z_1|^2)^2(1-|z_2|^2)^2\frac{2\mathrm{Re}(1-\bar{z_1}z_2)^2-1}{|1-z_1\bar{z}_2|^4}\big).
\end{multline*}
After simplifying, for instance by using computer algebra, we obtain the Bergman-inner function
\begin{multline*}
g_Z(z)=C_ZB_Z(z)\left(1+\frac{1-|z_1|^2}{1-\bar{z}_1z}+\frac{1-|z_2|^2}{1-\bar{z}_2z}\right.\\\left.+\frac{1-|z_1|^2}{1-\bar{z}_1z}\frac{1-|z_2|^2}{1-\bar{z}_2z}\frac{|1-z_1\bar{z_2}|^2-(1-|z_1|^2)(1-|z_2|^2)}{|1-z_1\bar{z_2}|^2+(1-|z_1|^2)(1-|z_2|^2)}\right),
\end{multline*}
where $B_{Z}=b_{z_1}b_{z_2}$ is a Blaschke product and $C_Z$ is the constant
\begin{multline*}
C_Z=\left(3-|z_1|^2-|z_2|^2\right.\\\left.+(1-|z_1|^2)(1-|z_2|^2)\frac{|1-z_1\bar{z_2}|^2-(1-|z_1|^2)(1-|z_2|^2)}{|1-z_1\bar{z_2}|^2+(1-|z_1|^2)(1-|z_2|^2)}\right)^{-1/2}
\end{multline*}
This is a special case of a more general result of Hansbo: in \cite[Corollary 2.10]{Hansbo}, he obtains a formula for the $A^p$-extremal function associated with two zeros having arbitrary multiplicity.  Note that, as is well-known (see, e.g., \cite{DS}), the rational function complementing the Blaschke product $B_Z$ in the above formula is (up to a constant) the reproducing kernel at $0$ in the weighted Bergman space with weight $|B_Z|^2$.

For $A^2$ then, the distance to a two-zero invariant subspace is
\begin{multline*}
\mathrm{dist}^2_{A^2}(1,[g_{\{z_1,z_2\}}])=1-|z_1z_2|^2\Big(3-|z_1|^2-|z_2|^2\\ +(1-|z_1|^2)(1-|z_2|^2) \frac{|1-\bar{z}_1z_2|^2-(1-|z_1|^2)(1-|z_2|^2)}{|1-\bar{z}_1z_2|^2+(1-|z_1|^2)(1-|z_2|^2)}\Big).
\end{multline*}
One immediately sees that not only the radial position but also the angle influences the distance function in the Bergman space: for two zeros on the same radius, the distance is maximized by placing the zeros at the same point, and minimized by placing them antipodally.
\end{ex}

\begin{ex}[Optimal approximants for a product function]
The following example should be compared with Theorem \ref{main}, part (a).

For $\lambda\in \mathbb{D}$, let
\[f(z)=(1-z)b_{\lambda}(z)=(1-z)\frac{\lambda-z}{1-\bar{\lambda}z}.\]
Since $f$ contains a Blaschke factor, $[f]$ is a proper closed invariant subspace of $H^2$, but $f$ is not inner in the sense of Definition \ref{Hinner}.

It is natural to ask what the optimal approximants $p_n^*$ to $1/f$ look like. By choosing coefficients in $p=\sum_{k=0}^nc_kz^k$ in a way that minimizes the norm
expression $\|pf-1\|_{H^2}$, we obtain
\[p_0^*=\frac{1}{2}\overline{\lambda},\quad p_1^*=\frac{2}{3}\overline{\lambda}\left(1+\frac{1}{2}z\right), \quad \textrm{and}\quad
p_2^*=\frac{3}{4}\overline{\lambda}\left(1+\frac{2}{3}z+\frac{1}{3}z^2\right).\]
We now recognize the $p_n^*$ for $n=0,1,2$ as $\overline{\lambda}$-multiples of the optimal approximants associated with the function $1-z$, as computed in \cite{BCLSS1}.

This is in fact the case for all $n$, as we will now prove. First, note that for any polynomial $p$,
\[\|pf-1\|_{H^2}=\|(1-z)p-b^{-1}_{\lambda}\|_{H^2}.\]
Next, we expand $b^{-1}_{\lambda}$ in a Laurent series: there is a constant term $\overline{\lambda}$, and the remaining powers of $z$ are all negative. By orthogonality then,
minimizing $\|pf-1\|_{H^2}$ over polynomials of degree $n$ is equivalent to minimizing $\|(1-z)p-\overline{\lambda}\|_{H^2}$
The equality
\[\|(1-z)p-\overline{\lambda}\|_{H^2}=|\lambda|\left\|\frac{p(1-z)}{\overline{\lambda}}-1\right\|_{H^2}\]
shows that the optimal approximants to $1/f$ are indeed given by $\overline{\lambda}$ times the optimal approximants to $1/(1-z)$ for all $n$.

A straight-forward modification of the above argument identifies the $H^2$-optimal approximants to $f=(1-z)B_{\Lambda}$, where $B_{\lambda}=\prod_{k=1}^{N}b_{\lambda_k}$ is a finite Blaschke product:
the $n$th-order optimal approximant is
\[p_n^*=\left(\prod_{k=1}^N\overline{\lambda}_{k}\right)\cdot q_n^*,\]
where $q_n^*$ is the optimal approximant of order $n$ to $1/(1-z)$.

Despite its simplicity, this example illustrates the fact that in
$H^2$, optimal approximants are essentially determined by the outer
part, since the linear system giving the coefficients of the
polynomials depends only on the outer part, and the inner part only
affects the end result by multiplying the independent term by a constant.
\end{ex}

\begin{ex}[Singular inner functions]
The construction of Shapiro and Shields does not not produce inner functions with singular factors. However, it is instructive to examine distances associated with such functions as well.

Let us focus on the atomic case. For any $\sigma>0$,
\[S_\sigma(z)=\exp\left(-\sigma\frac{1+z}{1-z}\right)\]
is an inner function for the Hardy space $H^2$, and we have
\[\mathrm{dist}_{H^2}^2(1,[S_{\sigma}])=1-|S_{\sigma}(0)|^2=1-e^{-2\sigma}.\]

The function $S_{\sigma}$ is not $A^2$-inner, but Duren, Khavinson, Shapiro, and Sundberg computed the Bergman extremal function for the subspace generated by $S_{\sigma}$ in
\cite{DKSS2}. Using a limiting argument, they obtained
\[G_{\sigma}(z)=\frac{1}{\sqrt{1+2\sigma}}\left(1+\frac{2\sigma}{1-z}\right)S_{\sigma}(z).\]
Hence
\[\mathrm{dist}^2_{A^2}(1,[S_{\sigma}])=1-(1+2\sigma)e^{-2\sigma}.\]
As with the case of a single zero, the Bergman distance is smaller than the Hardy distance, but this time merely by a correction of the coefficient multiplying $e^{-2\sigma}.$
\end{ex}
\begin{rem}
As can be seen from the preceding two examples, there are pairs of invariant subspaces, one zero-based and one associated with a singular inner function, that are equidistant to $1$.
\end{rem}

\end{document}